\documentclass[preprint,authoryear,12pt]{elsarticle}




\usepackage{amssymb}
\usepackage{amsthm}





\newtheorem{theorem}{Theorem}

\newtheorem{lemma}{Lemma}

\newtheorem{definition}{Definition}
\newtheorem{remark}{Remark}

\usepackage{dsfont}
\usepackage{mathtools}
\usepackage{graphicx}

\journal{Statistics \& Probability Letters}

\begin{document}

\begin{frontmatter}

\title{Mean and dispersion of harmonic measure}

\author[a]{Sirio Legramanti\corref{mycorrespondingauthor}}
\cortext[mycorrespondingauthor]{Corresponding author}
\ead{sirio.legramanti@phd.unibocconi.it}
\address[a]{Department of Decision Sciences, Bocconi University, Via Roentgen 1, 20136 Milan, Italy}

\begin{abstract}
	In this note, we provide and prove exact formulas for the mean and the trace of the covariance matrix 
	of harmonic measure, regarded as a parametric probability distribution.
\end{abstract}
\begin{keyword}
Brownian motion  \sep
Exit distribution \sep 
Location privacy \sep
GPS
\end{keyword}

\end{frontmatter}


\section{Introduction} 
\label{sec_intro}

The term \textit{harmonic measure} was introduced by \cite{nevanlinna1934harmonische} in the context of partial differential equations. 
Ten years later, \cite{kakutani1944} provided a probabilistic interpretation linked to Brownian motion: the harmonic measure on a regular bounded domain in $ \mathds{R}^d $ is the exit distribution of a Brownian motion started in a point $ \theta $ within the domain (see Theorem \ref{thm_kakutani}). The starting point $ \theta $ serves as a parameter for such distribution.\\ 
A few decades have passed but, up to our best knowledge, there is no explicit and direct reference in the literature for some of the basic results that are usually available for parametric distributions, as for example formulas for the mean and for some measure of dispersion. When such functionals are of interest, exact formulas can save simulations that would consume time and resources and would only offer approximate results. Such formulas can also highlight the role of the distribution parameters, especially if they are easy to interpret as in our case. Moreover, they can be useful in statistical inference, for example to derive the properties of the estimators.\\
This note provides exact formulas for the mean of the harmonic measure on the boundary of any regular bounded domain in $\mathds{R}^d$ and for the trace of the covariance matrix of the harmonic measure on the boundary of balls in $\mathds{R}^d$.\\
The remainder of this paper is organized as follows. In Section \ref{sec_harmonic} we recall basic definitions and properties of harmonic measure. In Section \ref{sec_results} we provide the above-mentioned formulas. In Section \ref{sec_application} we illustrate an application of such formulas to the assessment of privacy for GPS trajectories sharing. Finally, in Section \ref{sec_discussion} we draw conclusions and discuss possible future research directions.

\section{Harmonic measure} 
\label{sec_harmonic}

A definition of the harmonic measure on the border of a domain in $ \mathds{R}^d $ is the following.
\begin{definition} \label{def_harmonic_measure_PDE}
	\textbf{(Harmonic measure)} Let $ D $ be a domain (open, connected) in $ \mathds{R}^d $. For any Borel measurable subset $E$ of its boundary $\partial D$, the boundary data $f = \mathds{1}_E$ has a harmonic extension into $D$, which is called the \textit{harmonic measure} of $E$ and is denoted by $\omega(x,E,D)$, with $x \in D$.
\end{definition}
As a function of $x$, $\omega(x,E,D)$ is harmonic on $ D $ for any Borel measurable subset $E$ of its boundary $\partial D$. On the other hand, as a function of $E$, $\omega(x,E,D)$ is a measure on  $\partial D$ for any fixed $ x \in D $ \citep[Theorem 3.10]{hayman}.\\
Consider now a Markov process $ \{X_t\}_{t \in \mathds{R}^+} $ taking values in $\mathds{R}^d$ and denote with $\tau_D$ its exit time from a domain $D$ (i.e. its hitting time of $ D^c $).
We say that $D$ is \textit{regular} if and only if all its boundary points are regular in the following sense.
\begin{definition}
	\textbf{(Regular boundary point)}. Let $ D $ be a domain in $ \mathds{R}^d $. We say that $z \in \partial D$ is a \textit{regular boundary point} of $D$ if and only if
	\begin{equation} \label{reg_boundary_pt}
		\mathds{P}\{\tau_D=0 \mid X_0 = z \}=1
	\end{equation}
\end{definition}
The following theorem, that first appeared in \citet{kakutani1944}, provides a probabilistic interpretation of harmonic measure on regular bounded domains.
\begin{theorem} \label{thm_kakutani}
	Let $ \{X_t\}_{t \in \mathds{R}^+} $ be a Brownian motion in $\mathds{R}^d$. Then, for any regular bounded domain $ D $ in $ \mathds{R}^d $, any Borel measurable subset $E$ of its boundary $\partial D$ and any $x \in D$:
	\begin{equation} \label{eq_kakutani}
	\omega(x,E,D) = \mathds{P} \{ X_{\tau_D} \in E \mid X_0 = x \}.
	\end{equation}
\end{theorem}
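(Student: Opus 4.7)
The strategy is to identify the function $u(x) := \mathds{P}\{X_{\tau_D}\in E \mid X_0 = x\}$ with the harmonic extension of $\mathds{1}_E$ into $D$ given by Definition \ref{def_harmonic_measure_PDE}. Because the classical Dirichlet problem is not well posed for a discontinuous datum such as $\mathds{1}_E$, I would first establish the corresponding identity for continuous boundary data, namely
$$\mathbb{E}[f(X_{\tau_D})\mid X_0 = x] \;=\; \int_{\partial D} f(y)\,\omega(x,dy,D) \qquad \text{for every continuous } f:\partial D\to \mathds{R},$$
and then extend to Borel indicators by a measure-theoretic argument.

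Write $v_f(x) := \mathbb{E}[f(X_{\tau_D})\mid X_0 = x]$. To show $v_f$ is harmonic in $D$, fix a closed ball $\overline{B}(x,r)\subset D$ with exit time $\sigma$; by the strong Markov property applied at $\sigma$, together with the fact that after $\sigma$ the process is a Brownian motion started at $X_\sigma$, one obtains $v_f(x)=\mathbb{E}[v_f(X_\sigma)\mid X_0=x]$. Rotational invariance of Brownian motion implies that $X_\sigma$ is uniform on $\partial B(x,r)$, so this is precisely the spherical mean-value property and $v_f$ is harmonic in $D$. For the boundary behaviour at a regular point $z\in\partial D$, the hypothesis $\mathds{P}\{\tau_D=0\mid X_0=z\}=1$, combined with Blumenthal's $0$-$1$ law and continuity of Brownian paths, forces $\tau_D\to 0$ and $X_{\tau_D}\to z$ in probability as the starting point tends to $z$; dominated convergence and continuity of $f$ then give $v_f(x)\to f(z)$. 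Hence $v_f$ is a bounded harmonic function on $D$ with continuous boundary values $f$, and approximating $f$ by simple functions shows that it agrees with $\int_{\partial D} f(y)\,\omega(x,dy,D)$, the harmonic extension supplied by Definition \ref{def_harmonic_measure_PDE}.

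For arbitrary Borel $E\subset \partial D$, both $E\mapsto \omega(x,E,D)$ and $E\mapsto \mathds{P}\{X_{\tau_D}\in E\mid X_0=x\}$ are Borel probability measures on the compact set $\partial D$ whose integrals against every bounded continuous function coincide by the previous step; the uniqueness part of the Riesz representation theorem (or a routine monotone-class argument) then yields \eqref{eq_kakutani}. The main obstacle is the boundary-value step: translating the pointwise regularity condition $\mathds{P}\{\tau_D=0\mid X_0=z\}=1$ into the convergence of the \emph{exit point} $X_{\tau_D}$ to $z$ as the \emph{starting point} approaches $z$ requires combining the strong Markov property, Blumenthal's $0$-$1$ law, and path continuity, and is precisely where the regularity hypothesis on $\partial D$ is essentially used.
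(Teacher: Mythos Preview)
Your argument is correct and is essentially the classical proof of Kakutani's identity: establish harmonicity of $x\mapsto \mathbb{E}_x[f(X_{\tau_D})]$ via the strong Markov property and the spherical mean-value property, verify the boundary values at regular points, invoke uniqueness, and then pass from continuous $f$ to Borel indicators.

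The paper's own proof, by contrast, is simply a citation: it invokes Theorems~1.23 and~1.24 of Chung's monograph (applied with $f=\mathds{1}_E$) together with the fact that $\mathds{P}\{\tau_D<\infty\mid X_0=x\}=1$ for bounded $D$. The argument underlying those theorems is precisely the one you outline, so there is no genuine methodological difference---you have unpacked the black box rather than pointed to it.

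Two small points worth tightening. First, you never state explicitly that $\tau_D<\infty$ almost surely; this is needed for $X_{\tau_D}$ to be defined, and it is the one place where boundedness of $D$ enters (the paper singles this out). Second, the phrase ``approximating $f$ by simple functions shows that it agrees with $\int_{\partial D} f(y)\,\omega(x,dy,D)$'' slightly obscures the logic: what is really used is the maximum principle, which gives uniqueness of bounded harmonic functions on $D$ with prescribed continuous boundary values, once you know both $v_f$ and $x\mapsto\int f\,d\omega(x,\cdot,D)$ are such functions.
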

\begin{proof}
	Follows immediately from \citet[Theorems 1.23, 1.24]{chung}, plugging in $ f=\mathds{1}_E $ and recalling that 
	$ \mathds{P} \{ \tau_D < \infty \mid X_0 = x \} = 1 $ for any $x \in D$, since $ D $ is bounded
	\citep[Chapter 4.2, Corollary to Property (X)]{chung1982lectures}.
\end{proof}
If a bounded domain $D$ has Poisson kernel $K$ on $ D \times \partial D $, then - by definition - the harmonic measure on $\partial D$ can be expressed as:
\begin{equation} \label{eq_poisson_kernel}
\omega (x,E,D) = \int_E{K(x,y) \sigma(dy)}.
\end{equation}
where $\sigma$ is a Borel measure on $\partial D$ \citep[Chapter 1.4]{chung}. For $C^1$ domains, the 
the usual $(d-1)$ dimensional Lebesgue measure 
is taken as $ \sigma $.
When available, the Poisson kernel is then the Radon-Nikodim derivative of the harmonic measure with respect to $\sigma$. 
If - in the light of Theorem \ref{thm_kakutani} - we regard harmonic measure as a probability measure, its Poisson kernel can then be interpreted as its probability density function.\\
For example, if $D$ is a ball of center $ c $ and radius $ r $ - that we will denote with $ B(c,r) $ - the Poisson kernel is available, with the following explicit formula.
\begin{theorem} \label{thm_ball_kernel}
	For any ball $ B(c,r) $ in $ \mathds{R}^d $, 
	the Poisson kernel is
	\begin{equation} \label{eq_balls_kernel}
	K(x,y) = \frac{\Gamma(d/2)}{2\pi^{d/2}r} \; \frac{r^2-\|x-c\|^2} {\|x-y\|^d}
	\end{equation}
	where $ x \in B(c,r) $, $ y \in \partial B(c,r) $, $ \Gamma(\cdot) $ is the Gamma function and $ \| \cdot \| $ is the Euclidean norm in $\mathds{R}^d$.
\end{theorem}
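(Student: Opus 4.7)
The plan is to reduce to the unit ball and then verify the defining property of a Poisson kernel. Using the substitution $\tilde{x}=(x-c)/r$, $\tilde{y}=(y-c)/r$, the surface measure transforms as $\sigma(dy) = r^{d-1}\sigma(d\tilde{y})$, and by translation and scaling invariance of Brownian motion one has $\omega(x,E,B(c,r)) = \omega(\tilde{x},\tilde{E},B(0,1))$. Absorbing the Jacobian $r^{d-1}$ into the density reduces (\ref{eq_balls_kernel}) to showing that
\[
K_0(\tilde{x},\tilde{y}) = \frac{\Gamma(d/2)}{2\pi^{d/2}}\,\frac{1-\|\tilde{x}\|^2}{\|\tilde{x}-\tilde{y}\|^d}
\]
is the Poisson kernel of $B(0,1)$.

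For the unit ball, by Definition \ref{def_harmonic_measure_PDE} it suffices to check that, for every continuous $f$ on $\partial B(0,1)$, the integral $u(\tilde{x}) = \int_{\partial B(0,1)}K_0(\tilde{x},\tilde{y})f(\tilde{y})\sigma(d\tilde{y})$ is the harmonic extension of $f$. I would first verify by a direct Laplacian computation that $K_0(\cdot,\tilde{y})$ is harmonic on $B(0,1)$ for each fixed $\tilde{y}\in\partial B(0,1)$; alternatively, $K_0$ is (up to sign and a constant) the inward normal derivative of the Green's function of the unit ball, whose harmonicity follows from the classical method of images via inversion through the sphere. The normalizing constant $\Gamma(d/2)/(2\pi^{d/2})$ is the reciprocal of the surface area of the unit sphere in $\mathds{R}^d$, and is pinned down by requiring $u\equiv 1$ when $f\equiv 1$, equivalently $\int_{\partial B(0,1)} K_0(\tilde{x},\tilde{y})\sigma(d\tilde{y})=1$ (most transparently verified at the center $\tilde{x}=0$, where $K_0$ is constant).

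The main obstacle is the boundary behaviour: $u(\tilde{x}) \to f(\tilde{y}_0)$ as $\tilde{x}\to\tilde{y}_0\in\partial B(0,1)$. The standard argument writes $u(\tilde{x})-f(\tilde{y}_0) = \int K_0(\tilde{x},\tilde{y})\,[f(\tilde{y})-f(\tilde{y}_0)]\,\sigma(d\tilde{y})$ and splits the domain of integration into a small spherical cap around $\tilde{y}_0$, where $|f(\tilde{y})-f(\tilde{y}_0)|$ is controlled by uniform continuity of $f$, and its complement, where $\|\tilde{x}-\tilde{y}\|$ remains bounded below while the factor $1-\|\tilde{x}\|^2$ tends to $0$, so $K_0(\tilde{x},\tilde{y})\to 0$ uniformly. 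Uniqueness of the harmonic extension (via the maximum principle) then identifies $u$ with $\omega(\tilde{x},\cdot,B(0,1))$ when $f=\mathds{1}_E$, which together with the initial reduction gives (\ref{eq_balls_kernel}).
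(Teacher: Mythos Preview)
Your argument is correct and is essentially the classical textbook derivation of the Poisson kernel for the ball: reduce to $B(0,1)$ by translation and dilation (absorbing the Jacobian $r^{d-1}$ of the surface measure), check harmonicity of $K_0(\cdot,\tilde y)$, fix the normalising constant via the surface area $2\pi^{d/2}/\Gamma(d/2)$ of the unit sphere, and establish the boundary limit by the usual approximate-identity splitting. One small point worth making explicit: you verify the Dirichlet property for continuous $f$ and then invoke it for $f=\mathds{1}_E$; the passage requires either an approximation argument or the observation that both $u$ and $\omega(\cdot,E,B(0,1))$ are bounded harmonic functions agreeing (via your continuous case and the maximum principle) on a measure-determining class, which is routine.

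By contrast, the paper does not give its own proof of this theorem at all: it simply cites \citet[Theorem 1.13]{chung}. So your proposal is not so much a different route as an actual proof where the paper defers to the literature; what you have written is, in outline, precisely the argument one finds in the cited reference.
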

\begin{proof}
	See \citet[Theorem 1.13]{chung}.
\end{proof}

\section{Main results} 
\label{sec_results}

In this section, we adopt the probabilistic interpretation of harmonic measure and hence refer to it as a probability distribution. In particular, we say that a random vector $ Y $ on the boundary $ \partial D $ of a domain $ D $ in $ \mathds{R}^d $ is distributed according to the harmonic measure on $ \partial D $ parametrized by $ \theta \in D $, and we write $ Y \sim \mathds{H}_\theta^D $, if for any Borel measurable subset $E$ of $\partial D$:
\begin{equation}
\mathds{P} \{ Y \in E \} = \omega(\theta,E,D).
\end{equation}
By Theorem \ref{thm_kakutani}, if $ D $ is bounded and regular, this is equivalent to $ Y $ having the same distribution as the exit point from $ D $ of a Brownian motion started in $ \theta $.\\

The following lemma provides the formula for the mean of the harmonic measure on the boundary of any regular bounded domain in $ \mathds{R}^d $.
\begin{lemma} \label{lemma_mean}
	Let $ D $ be a regular bounded domain in $ \mathds{R}^d $. 
	The mean of the harmonic measure on $ \partial D $ parametrized by $ \theta \in D $ is 
	$\theta$ itself. In symbols:
	\begin{equation*}
	Y \sim \mathcal{H}_\theta^D \
	\implies \ \mathds{E}[Y] = \theta.
	\end{equation*}
\end{lemma}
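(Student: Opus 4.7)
The plan is to leverage Theorem \ref{thm_kakutani} to recast the question in purely probabilistic terms: since $ D $ is regular and bounded, if $ Y \sim \mathcal{H}_\theta^D $ then $ Y $ has the same distribution as $ X_{\tau_D} $, where $ \{X_t\}_{t \in \mathds{R}^+} $ is a Brownian motion in $ \mathds{R}^d $ started at $ \theta $. Computing $ \mathds{E}[Y] $ thus reduces to computing $ \mathds{E}[X_{\tau_D} \mid X_0 = \theta] $.

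The natural tool is the optional stopping theorem applied coordinate by coordinate. Each component of a standard Brownian motion is a real-valued martingale, so $ \mathds{E}[X_{t \wedge \tau_D} \mid X_0 = \theta] = \theta $ for every $ t \geq 0 $. The key observation is that the stopped trajectory $ \{X_{t \wedge \tau_D}\}_{t \geq 0} $ remains in the closure $ \overline{D} $, which is bounded by assumption, and is therefore uniformly bounded. Combined with the fact that $ \tau_D < \infty $ almost surely (a property already invoked in the proof of Theorem \ref{thm_kakutani}), the dominated convergence theorem gives
\begin{equation*}
\mathds{E}[X_{\tau_D} \mid X_0 = \theta] \;=\; \lim_{t \to \infty} \mathds{E}[X_{t \wedge \tau_D} \mid X_0 = \theta] \;=\; \theta,
\end{equation*}
which is the claim.

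There is essentially no serious obstacle: Kakutani's representation, the martingale property of Brownian motion, and boundedness of $ D $ are all available or assumed. The one point deserving a line of justification is the interchange of limit and expectation, which is handled by the uniform boundedness of the stopped process inside $ \overline{D} $.

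A purely analytic alternative would be to observe that each coordinate function $ u_i(x) = x_i $ is harmonic on $ D $ and continuous on $ \overline{D} $, so by the defining property of harmonic measure $ u_i(\theta) = \int_{\partial D} y_i \, d\omega(\theta, y, D) $, i.e.\ $ \mathds{E}[Y_i] = \theta_i $. I would nevertheless prefer the martingale argument above, as it is more consistent with the probabilistic framing explicitly adopted at the start of Section \ref{sec_results}.
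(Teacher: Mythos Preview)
Your argument is correct and considerably more direct than the paper's own proof. The paper does not apply optional stopping to Brownian motion itself; instead it builds a discrete ``ball--walk'' martingale $\{Y_n\}_{n\ge 0}$ with $Y_0=\theta$ and $Y_{n+1}\mid Y_n$ uniform on $\partial B(Y_n,r_n)$ for $r_n=\tfrac{1}{2}\,dist(Y_n,\partial D)$, invokes Doob's forward convergence theorem to obtain an almost sure limit $Y$, argues that $Y\in\partial D$ and that $Y\sim\mathcal{H}_\theta^D$ by piecing together the Brownian segments, and finally passes the constant expectation through the limit via dominated convergence.

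Your route is shorter and cleaner for Brownian motion, and the analytic variant you sketch (each coordinate is its own harmonic extension) is shorter still. What the paper's construction buys is generality: because it only uses spherical symmetry of the exit law from a centered ball and the strong Markov property, it immediately yields Remark~\ref{rem_mean}, namely that the same conclusion holds for the exit distribution of any symmetric strong Markov process from a regular bounded domain. Your optional stopping argument, by contrast, needs the coordinate processes to be martingales, which is a stronger hypothesis than symmetry plus strong Markov.
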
 
\begin{proof}
Consider the sequence of random vectors $ \{Y_n\}_{n \geq 0} $ defined as follows:
	\begin{equation} 	\label{martingale}
	Y_0 = \theta, \quad \quad
	Y_{n+1} \ | \ Y_n \ \sim \ \mathcal{H}_{Y_n}^{B(Y_n,r_n)}, \quad \quad \mbox{with} \quad r_n = \frac{1}{2} \ dist(Y_n,\partial D).
	\end{equation}
	This construction is adapted from the one in \citet[Appendix F]{garnett2005harmonic}. 
	By Theorem \ref{thm_kakutani}, this is equivalent to $ Y_{n+1} $ being the exit point from $ B(Y_n,r_n) $ of a Brownian motion started in $ Y_n $, with $ r_n $ as in \eqref{martingale}. 
	By symmetry, $ Y_{n+1} $ is uniformly distributed over $ \partial B(Y_n,r_n) $ and $\mathds{E}[Y_{n+1}|Y_n]=Y_n$. Hence, $ \{Y_n\}_{n \geq 0} $ is a martingale with respect to the natural filtration and $ \mathds{E}[Y_n] = 
	\theta $ for all $ n \geq 0 $.
	Since $ Y_n \in D $ for all $ n \geq 0 $ and $ D $ is bounded, such martingale is bounded in $ L^1 $ and hence, by Doob's Forward Convergence Theorem \citep[Theorem 11.5]{williams1991probability}, it converges almost surely to a random vector $Y$. 
	As a consequence, almost surely (as everything that follows), the sequence $ \{ Y_n \}_{n \geq 0} $ is Cauchy and hence $ dist(Y_n,Y_{n+1}) \to 0 $. But $ dist(Y_n,Y_{n+1}) = r_n =  dist(Y_n,\partial D) / 2 $, 
	which implies that $dist(Y_n,\partial D) \to 0$ and hence $Y \in \partial D$. Specifically, $ Y $ is the exit point from $D$ of a Brownian motion started in $ \theta $, whose trajectory up to its first exit from $ D $ can be reconstructed joining all the stretches of Brownian motion leading from $ Y_{n} $ to $ Y_{n+1} $. Hence, by Theorem \ref{thm_kakutani}, $Y \sim \mathcal{H}_\theta^D$.
	Since D is bounded, applying Dominated Convergence Theorem \citep[Theorem 5.9]{williams1991probability} componentwise, we can conclude that:
	\begin{equation*}
	\mathds{E}[Y] = \mathds{E}[\lim_{n \to \infty} Y_n] = \lim_{n \to \infty} \mathds{E}[Y_n] = \lim_{n \to \infty} \theta = \theta.
	\end{equation*}
\end{proof}
\begin{remark}\label{rem_mean}
Lemma \ref{lemma_mean} holds for the exit distribution from a regular bounded domain of any symmetric stochastic process in $ \mathds{R}^d $ with the strong Markov property.
\end{remark}
The following lemma provides the formula for the trace of the covariance matrix of the harmonic measure on the boundary of a ball in $ \mathds{R}^d $.
\begin{lemma} \label{lemma_trace}
	The trace of the covariance matrix of the harmonic measure on the boundary of a ball $ B(c,r) $ in $ \mathds{R}^d $ 
	parametrized by $\theta \in B(c,r)$ is equal to $ r^2- \| \theta - c \|^2 $. In symbols:
	\begin{equation}\label{eq_trace_lemma}
	Y \sim \mathcal{H}_\theta^{B(c,r)} 
	\ \implies \ tr(\Sigma_Y) = r^2-\| \theta - c \|^2.
	\end{equation}
\end{lemma}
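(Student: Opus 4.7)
The plan is to exploit the fact that $\mathrm{tr}(\Sigma_Y) = \mathbb{E}[\|Y - \mathbb{E}[Y]\|^2]$, combined with Lemma \ref{lemma_mean} which gives $\mathbb{E}[Y] = \theta$, so that the trace equals $\mathbb{E}[\|Y-\theta\|^2]$. Then I would exploit the geometric fact that $Y$ lies on the sphere $\partial B(c,r)$, so $\|Y-c\| = r$ almost surely.

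Concretely, first I would recall (or briefly verify, expanding the sum $\sum_i \mathrm{Var}(Y_i)$) the identity
\begin{equation*}
\mathrm{tr}(\Sigma_Y) \;=\; \mathbb{E}\bigl[\|Y - \mathbb{E}[Y]\|^2\bigr].
\end{equation*}
Applying Lemma \ref{lemma_mean} to $D = B(c,r)$ (which is a regular bounded domain), this becomes $\mathbb{E}[\|Y-\theta\|^2]$. Next I would rewrite the integrand by shifting to $c$:
\begin{equation*}
\|Y-\theta\|^2 \;=\; \|(Y-c) - (\theta-c)\|^2 \;=\; \|Y-c\|^2 \;-\; 2\,(Y-c)\cdot(\theta-c) \;+\; \|\theta-c\|^2.
\end{equation*}
Since $Y \in \partial B(c,r)$ almost surely, the first term is the constant $r^2$, and taking expectations yields
\begin{equation*}
\mathbb{E}[\|Y-\theta\|^2] \;=\; r^2 \;-\; 2\,(\mathbb{E}[Y]-c)\cdot(\theta-c) \;+\; \|\theta-c\|^2.
\end{equation*}
Substituting $\mathbb{E}[Y] = \theta$ from Lemma \ref{lemma_mean} collapses the cross term to $-2\|\theta-c\|^2$, giving $r^2 - \|\theta-c\|^2$, as required.

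There is essentially no obstacle: the argument is a one-line algebraic expansion once the two structural inputs (Lemma \ref{lemma_mean} and the sphere constraint $\|Y-c\|=r$) are in place. The only thing worth being careful about is making sure the mean formula from Lemma \ref{lemma_mean} is legitimately applicable, which it is, since balls are regular bounded domains; this is why the trace formula specializes to balls while Lemma \ref{lemma_mean} holds for general regular bounded domains.
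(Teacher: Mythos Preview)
Your proof is correct, and it takes a genuinely different route from the paper's.

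The paper argues via the Brownian motion representation: writing $Y \stackrel{d}{=} X_\tau$ with $\tau$ the first exit time from $B(0,r)$, it applies the law of total variance coordinatewise, uses $\mathrm{Var}(Y_i \mid \tau) = \tau$ and the known formula $\mathds{E}[\tau] = (r^2 - \|\theta\|^2)/d$ for the expected exit time of Brownian motion from a ball, and shows the conditional-mean term vanishes. Summing over $i$ gives the result.

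Your argument bypasses the Brownian machinery entirely: once Lemma~\ref{lemma_mean} gives $\mathds{E}[Y]=\theta$, you use only the geometric constraint $\|Y-c\|=r$ and a quadratic expansion. This is shorter and more elementary, and in fact shows something slightly stronger: the identity $\mathrm{tr}(\Sigma_Y) = r^2 - \|\theta-c\|^2$ holds for \emph{any} distribution supported on $\partial B(c,r)$ with mean $\theta$, not just harmonic measure. The paper's route, by contrast, ties the computation to the exit-time moment $\mathds{E}[\tau]$, which is informative if one cares about that connection but is not needed for the bare statement. Interestingly, the paper's Remark~\ref{rem_2dim} carries out essentially your computation in the special case $d=2$ via the explicit Poisson kernel; your version shows the kernel is unnecessary and the argument is dimension-free.
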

\begin{proof}
	First assume $ c=0 $.
	Since $ B(0,r) $ is bounded and regular, by Theorem \ref{thm_kakutani}, $ Y \stackrel{d}{=} X_\tau $, where $ \{X_t\}_{t \in \mathds{R}^+} $ is a Brownian motion starting in $ \theta $ and $ \tau $ is its first exit point from $ B(0,r) $. By the law of total variance, for each coordinate $ Y_i $ of $ Y $ ($ i=1,\ldots,d $):
	\begin{equation*}
	Var(Y_i) = \mathds{E}[Var(Y_i | \tau)] + Var(\mathds{E}[Y_i|\tau]).
	\end{equation*}
	Recalling that $ Y_i $ is the position of a univariate Brownian motion at time $ \tau $ and exploiting the formula for the expected exit time of a Brownian motion from $ B(0,r) $ provided in \citet[Example 7.4.2]{oksendal2003stochastic}, we get
	\begin{equation*}
	\mathds{E}\left[Var(Y_i | \tau)\right] = \mathds{E}[\tau] = \frac{1}{d} \left(r^2-\|\theta\|^2 \right)
	\end{equation*}
	while, by Lemma \ref{lemma_mean},
	\begin{equation*}
	Var(\mathds{E}[Y_i|\tau]) \leq Var(\mathds{E}[Y_i]) = Var (\theta_i) = 0.
	\end{equation*}
	Putting all previous equations together, we have
	\begin{equation*}
	tr(\Sigma_Y) = \sum_{i=1}^d Var(Y_i) = \sum_{i=1}^d \frac{1}{d} \left( r^2-\|\theta\|^2 \right) = r^2-\|\theta\|^2.
	\end{equation*}
	The formula for general $ c \in \mathds{R}^d $ follows by translation.
\end{proof}
\begin{remark} \label{rem_trace}
	Let $ Y $ be a random vector in $ \mathds{R}^d $ with $ Y \sim \mathcal{H}_\theta^{B(c,r)} $.
	For $ d=1 $, $ tr(\Sigma_Y) = Var(Y) = \mathds{E} \left[ | Y - \theta |^2 \right]$. Similarly, for higher dimensions:
	\begin{equation*}
	tr(\Sigma_Y) = \sum_{i=1}^d Var(Y_i) = \mathds{E}\left[ \sum_{i=1}^d | Y_i - \theta_i |^2 \right] = \mathds{E} \left[ \| Y - \mathds{E}[Y] \|^2 \right] = \mathds{E} \left[ \| Y - \theta \|^2 \right].
	\end{equation*}
\end{remark}
\begin{remark}\label{rem_2dim}
For $ d=2 $, equation \eqref{eq_trace_lemma} is easily obtained by computing directly $ \mathds{E} \left[ \| Y - \theta \|^2 \right] $, exploiting Remark \ref{rem_trace} and the probability density function of $ Y $ provided in Theorem \ref{thm_ball_kernel}:
\begin{equation*}
tr(\Sigma_Y) = \mathds{E} \left[ \| Y - \theta \|^2 \right] = \frac{1}{2\pi r} \ \int_{\|y\|=r}  \|y-\theta\|^2 \  \frac{r^2-\|\theta-c\|^2}{\|y-\theta\|^2} \ dy =  r^2-\|\theta-c\|^2.
\end{equation*}
\end{remark}
In the light of Theorem \ref{thm_kakutani}, the interpretation of Lemma \ref{lemma_trace} is straightforward: the closer the starting point of the Brownian motion is to the boundary of the ball, the more concentrated the exit distribution. The most spread exit distribution is obtained when the starting point is the center of the ball, producing an exit distribution which is uniform over the boundary.\\

As a double-check and example, in Table \ref{tab_sims} we compare the theoretical and empirical mean and trace of the covariance matrix of the exit point from $ B(0,1) $ of a $ d $-dimensional Brownian motion started in $ \theta $, for different dimensions $ d $ and  starting points $ \theta $. Theoretical values are based on Lemma \ref{lemma_mean} and Lemma \ref{lemma_trace}. Empirical estimates are based on a sample of $ 500 $ exit points for each setting. Brownian motion was simulated at a timestep of $ 10^{-4} $.

\begin{table}
\resizebox{\textwidth}{!} {
\begin{tabular}{ccccccccccc}
		d & $ \theta_1 $ & $ \theta_2 $ & $ \theta_3 $ & $ \theta_4 $ & $ \bar{y}_1 $ & $ \bar{y}_2 $ & $ \bar{y}_3 $ & $ \bar{y}_4 $ & $ r^2-\|\theta-c\|^2 $ & $ tr(\widehat{\Sigma_Y}) $\\
	\hline
2 & 0.20 & 0.00 &  &  & 0.17 & 0.03 &  &  & 0.96 & 0.98 \\ 
2 & 0.50 & 0.00 &  &  & 0.45 & -0.01 &  &  & 0.75 & 0.81 \\ 
2 & 0.80 & 0.00 &  &  & 0.82 & -0.02 &  &  & 0.36 & 0.35 \\ 
3 & 0.20 & 0.00 & 0.00 &  & 0.23 & -0.01 & 0.03 &  & 0.96 & 0.96 \\ 
3 & 0.50 & 0.00 & 0.00 &  & 0.49 & -0.00 & -0.00 &  & 0.75 & 0.77 \\ 
3 & 0.80 & 0.00 & 0.00 &  & 0.79 & -0.01 & 0.01 &  & 0.36 & 0.39 \\ 
4 & 0.20 & 0.00 & 0.00 & 0.00 & 0.20 & -0.03 & 0.02 & 0.03 & 0.96 & 0.97 \\ 
4 & 0.50 & 0.00 & 0.00 & 0.00 & 0.49 & -0.05 & 0.00 & -0.02 & 0.75 & 0.77 \\ 
4 & 0.80 & 0.00 & 0.00 & 0.00 & 0.80 & -0.02 & -0.02 & 0.01 & 0.36 & 0.37 \\ 
	\hline
\end{tabular}
}
\caption{Theoretical and empirical mean and trace of the covariance matrix of $ Y $, the exit point from $ B(0,1) $ of a $ d $-dimensional Brownian motion started in $ \theta $, for different dimensions $ d $ and  starting points $ \theta $.}
\label{tab_sims}
\end{table}

\section{An application to privacy for GPS trajectories sharing}
\label{sec_application}

The results in Section \ref{sec_results} are very useful, for example, in addressing the privacy issues related to GPS trajectories, that nowadays are massively recorded and shared due to 
the diffusion of GPS sensors and the success of fitness apps.
Unfortunately, GPS trajectories can be exploited to locate a user's house \citep{liao2006location,hoh2006enhancing,krumm2007inference}, endangering both people and property.\\
Possible countermeasures rely on perturbing or cutting original trajectories as much as necessary to meet the required privacy standards.
A statistically sound way to assess the amount of privacy guaranteed by a countermeasure is to regard a privacy attack as a parameter estimation problem, with the parameter of interest being the user's house location, and measure the efficacy of the countermeasure by the quality of the estimation based on the perturbed/cut trajectories: the better the estimation, the poorer the privacy.\\
A family of possible countermeasures is \textit{spatial cloaking}, which consists in hiding the part of the trajectory within a given privacy area \citep{gruteser2003anonymous}.
Without loss of generality, let's assume that such area is a domain in $ \mathds{R}^2 $.  
If we assume that the user's motion is memoryless and always starts in the user's house, trajectories exit points from the privacy area are sufficient statistics for the user's house location.
In such case, properties of the exit distribution - in particular its mean and dispersion - become fundamental to assess the quality of the estimation and hence the level of privacy.
If users' motion is symmetric, Remark \ref{rem_mean} implies that the sample mean of the exit points is an unbiased estimator of the user's house location. Results about the dispersion of the exit distribution are then necessary to quantify the dispersion of such estimator. 
Lemma \ref{lemma_trace} answers to this question for the exit distribution of a Brownian motion from a ball, but further research is needed for differently shaped domains or different stochastic processes.
Still, even if Brownian motion is not the most realistic model for the movement of vehicles in a city, it can be useful in an exploratory analysis, in particular to rule out candidate countermeasures: if a countermeasure cannot hide the start of a Brownian motion, it cannot aim to hide the start of the much more structured human movement.

\section{Discussion}
\label{sec_discussion}

In the present note, we have approached harmonic measure as a parametric probability distribution and we have provided formulas for its mean and for a measure of its dispersion.
In particular, we have proved exact formulas for the mean of the harmonic measure on the boundary of any regular bounded domain in $\mathds{R}^d$ and for the trace of the covariance matrix of the harmonic measure on the boundary of balls in $\mathds{R}^d$.
Such formulas can save simulations, offer better insights on the role of the distribution parameters and are useful in statistical inference.
Further research might focus on formulas for the trace of the covariance matrix of the harmonic measure on a broader class of domains or for 
stochastic processes other than Brownian motion, as suggested by the application in Section \ref{sec_application}.

\section*{Acknowledgments}

The author is grateful to Giacomo Aletti and Daniele Durante for their helpful comments on a first version of this work.

\bibliographystyle{model2-names}
\bibliography{harmonic}


\end{document}